\documentclass[12pt]{amsart}
\pdfoutput=1
\usepackage{microtype}
\usepackage[active]{srcltx}
\usepackage{hyperref}
\usepackage{calc,amssymb,amsthm,amsmath,amscd, eucal,ulem, mathtools}
\usepackage{phaistos}
\usepackage{alltt}
\RequirePackage[dvipsnames,usenames]{xcolor}

\input{mabliautoref.sty}
\input{xy}
\xyoption{all}
\input{kmacros3.sty}
\usepackage{tikz,calc}
\usepackage{tikz-cd}
\usetikzlibrary{external}

\usepackage[marvosym]{tikzsymbols}
\usepackage{amsfonts, mathrsfs}
\usepackage[cal=boondox, calscaled=1.05]{mathalfa}
\usepackage{calligra}
\usepackage{stmaryrd} %power series bracket
\usepackage{dcpic, pictexwd}
\usepackage[left=1in,top=1in,right=1in,bottom=1in]{geometry}
\usepackage{bm}
\usepackage{verbatim}
\usepackage{upgreek}

\numberwithin{equation}{theorem}

\renewcommand{\:}{\colon}

\newcommand{\p}{\mathfrak{p}}
\newcommand{\kay}{\mathcal{k}}
\newcommand{\el}{\mathcal{l}}

\newcommand{\q}{\mathfrak{q}}

\DeclareMathOperator{\RHom}{RHom}

\usepackage{setspace}
\usepackage{enumerate}
\usepackage{graphicx}
\usepackage[all,cmtip]{xy}

\usepackage{verbatim}

\renewcommand{\sF}{\mathcal{F}}

\renewcommand{\sH}{\mathcal{H}}

\renewcommand{\sO}{\mathcal{O}}

\usepackage{marvosym}

\begin{document}
\title{Adjoint test modules along Cohen--Macaulay morphisms} 
\author[J.~Carvajal-Rojas]{Javier Carvajal-Rojas}
\address{Centro de Investigaci\'on en Matem\'aticas, A.C., Callej\'on Jalisco s/n, 36023 Col. Valenciana, Guanajuato, Gto, M\'exico}
\email{\href{mailto:javier.carvajal@cimat.mx}{javier.carvajal@cimat.mx}}
\author[A.~St\"abler]{Axel St\"abler}
\address{Universit\"at Leipzig\\ Mathematisches Institut\\ Augustusplatz 10\\
04109 Leipzig\\Germany} 
\email{\href{mailto:staebler@math.uni-leipzig.de}{staebler@math.uni-leipzig.de}}

\keywords{Adjoint test module, Cohen--Macaulay morphism, $F$-rationality}

\thanks{Carvajal-Rojas was partially supported by the grants SECIHTI \#CBF2023-2024-224, \#CF-2023-G-33, and \#CBF-2025-I-673.}

\subjclass[2020]{13A35, 14B05, 14G17}

\begin{abstract}
We provide a transformation rule for adjoint test modules along Cohen--Macaulay maps between Cohen--Macaulay varieties that have $F$-rational geometric fibers. This is, in part, an effective version of Enescu's theorem on the ascent of $F$-rationality under local maps with $F$-rational geometric fibers.
\end{abstract}
\maketitle

\section{Introduction}

Rational singularities are the gold standard for mild, nice singularities in algebraic geometry. Under reduction mod $p$, rationality corresponds to \emph{$F$-rationality} in the theory of $F$-singularities. A key foundational result in this theory is that of Florian~Enescu \cite[Theorem 2.24]{EnescuOnTheBehaviorOfFrationalRingsUnderFlatBaseChange}. Enescu showed that, under mild technical hypotheses, $F$-rationality ascends along flat local homomorphisms $\theta \: (R,\fram,\kay) \to (S,\fran,\el)$ with geometrically $F$-rational fibers; i.e., $\kay' \otimes_{R} S$ is $F$-rational for all field extensions $\kay'/\kay$. However, there are a few ways in which one can measure $F$-rationality, and so it is natural to wonder about transformation rules for these invariants that serve as effective generalizations of Enescu's theorem. Here, we focus on adjoint test modules.

Recall that $F$-rational singularities are normal and Cohen–Macaulay singularities. To see what they are among them, let us fix an $F$-finite base field $\kay$ (e.g., perfect), and consider a normal Cohen–Macaulay $\kay$-variety $X$ with canonical sheaf $\omega_X$. It comes equipped with the so-called \emph{Cartier operator} $\kappa\: F_* \omega_X \to \omega_X$, and so $(\omega_X,\kappa)$ is a Cartier module on $X$. The \emph{adjoint test module} of $X$ is $\uptau(\omega_X)\subset \omega_X$ and is defined as the smallest non-trivial Cartier $\sO_X$-submodule of $(\omega_X,\kappa)$, i.e., the smallest non-trivial submodule $M\subset \omega_X$ such that $\kappa(F_* M) \subset M$. Then, $X$ has $F$-rational singularities if and only if $\uptau(\omega_X) = \omega_X$. Thus, the annihilator ideal of $\omega_X/\uptau(\omega_X)$ cuts out the non-$F$-rational locus of $X$, thereby measuring the $F$-rationality of $X$. The above can be relaxed to the case where $X$ is a Cohen–Macaulay reduced $\kay$-scheme of finite type, at the expense of a more technical definition; see \autoref{def.canonicalsheafandtestmodule}. The goal of this note is to prove the following transformation rule.

\begin{mainthm*}
\label{CMfratfibersthm}
Let $X$ be a reduced Cohen--Macaulay scheme of finite type over an $F$-finite field $\kay$. Let $f\: Y \to X$ be a finite-type Cohen--Macaulay map of relative dimension $n$ with dualizing sheaf $\omega_f$. Suppose that the fibers of $f$ containing closed points of $Y$ are geometrically $F$-injective, and that those over the generic points of $X$ (i.e. the generic fibers) are $F$-rational.
Then,
\[
\uptau(\omega_Y)= \omega_f \otimes f^\ast \uptau(\omega_X),
\]
which entails that $Y$ is reduced and Cohen--Macaulay. Thus, if $X$ is $F$-rational then so is $Y$.
\end{mainthm*}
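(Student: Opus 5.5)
We will prove both inclusions separately, working locally. We may assume $X=\operatorname{Spec} R$ and $Y=\operatorname{Spec} S$ are affine, and that $f$ corresponds to a flat finite-type map $R\to S$ with Cohen--Macaulay fibers of pure dimension $n$. The first step is to set up the Cartier structures compatibly. The relative dualizing module $\omega_f=\omega_{S/R}$ is flat over $R$ and satisfies $\omega_S\cong \omega_{S/R}\otimes_R\omega_R$. Along the relative Frobenius $F_{S/R}\colon S\otimes_{R,F}R\to S$, Grothendieck duality gives a relative Cartier map $\kappa_{S/R}\colon F_{S/R,*}\omega_{S^{(1)}/R}\to\omega_{S/R}$. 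The Cartier operator $\kappa_S$ factors as the composite of $\kappa_{S/R}$ with the base change of $\kappa_R$ tensored with $\omega_{S/R}$; this is the usual factorization $F_S=F_{S/R}\circ(\text{base change of }F_R)$. Since everything is compatible with $R$-flat base change, $\omega_{S/R}\otimes_R M$ is a Cartier submodule of $\omega_S$ whenever $M\subset\omega_R$ is a Cartier submodule.

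For the inclusion $\uptau(\omega_Y)\subseteq \omega_f\otimes f^*\uptau(\omega_X)$, we use \autoref{def.canonicalsheafandtestmodule}: $\uptau$ is the smallest Cartier submodule that agrees with $\omega$ generically on each component. Flatness makes $f$ map generic points of $Y$ to generic points of $X$. At a generic point $\eta$ of $X$ we have $\uptau(\omega_X)_\eta=\omega_{X,\eta}$. Hence $\omega_{S/R}\otimes\uptau(\omega_R)$ is a Cartier submodule of $\omega_S$ that is generically equal to $\omega_S$. By minimality it contains $\uptau(\omega_S)$.

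The reverse inclusion is the main obstacle. First we reduce to the case where $R$ is local at a closed point and $S$ is local at a closed point of the fiber. Write $N=\uptau(\omega_S)$. We aim to show $N\supseteq \omega_{S/R}\otimes\uptau(\omega_R)$ by a Nakayama/descent argument along the closed fiber, in the spirit of Enescu's proof.

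\begin{enumerate}[(a)]
\item Show that $N$ is generically full, using the generic fibers. Over a generic point $\eta$ of $X$, the local ring $R_\eta$ is a field (since $R$ is reduced) and $S_\eta$ is the generic fiber, which is $F$-rational. Thus $\uptau(\omega_{S_\eta})=\omega_{S_\eta}$, and since test modules localize, $N_\eta=\omega_{S_\eta}$.
\item Consider the "pushforward" $M:=\{m\in\omega_R : \omega_{S/R}\otimes m\subseteq N\}$, or more precisely the largest $M$ with $\omega_{S/R}\otimes M\subseteq N$. Using $R$-flatness of $\omega_{S/R}$ and the factorization of $\kappa_S$, show that $M$ is a Cartier submodule of $\omega_R$. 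This step requires the relative Cartier map $\kappa_{S/R}$ to be surjective, which is where geometric $F$-injectivity of the closed fibers enters. $F$-injectivity of a Cohen--Macaulay fiber $S\otimes\kay'$ is equivalent to surjectivity of its Cartier map on $\omega$. Geometric $F$-injectivity ensures this holds after the base change to $\kay^{1/p}$ implicit in the relative Frobenius. Combined with Nakayama, it gives surjectivity of $\kappa_{S/R}$ near closed points.
\item By step (a), $M$ is generically all of $\omega_R$. Minimality then gives $M\supseteq\uptau(\omega_R)$.
\end{enumerate}

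The delicate point is proving that $M$ is $\kappa_R$-stable using only surjectivity of $\kappa_{S/R}$. To do this, we write $\kappa_S(F_*(\omega_{S/R}\otimes m))$, using surjectivity, as $\omega_{S/R}\otimes\kappa_R(F_*m)$ modulo the image.

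Finally, $\omega_f\otimes f^*\uptau(\omega_X)=\uptau(\omega_Y)$ is then the form required in \autoref{def.canonicalsheafandtestmodule}. $Y$ is Cohen--Macaulay because $f$ is flat with Cohen--Macaulay fibers over a Cohen--Macaulay base. $Y$ is reduced by Serre's criterion: $X$ is reduced, the generic fibers are reduced because they are $F$-rational, and $Y$ is $S_1$. If $X$ is $F$-rational, then $\uptau(\omega_X)=\omega_X$, so $\uptau(\omega_Y)=\omega_f\otimes\omega_X=\omega_Y$, and hence $Y$ is $F$-rational.
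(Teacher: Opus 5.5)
Your proof is correct, but it is organized differently from the paper's.

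\textbf{The paper's route.} The paper does not split the statement into two inclusions. It uses the $F$-rationality of the generic fibers to build a common test element $c\in R$ for $\omega_R$ and $\omega_S$. It proves $\gamma^e$ surjective by base change to perfect closures of residue fields together with \autoref{le.fibernakayama}. It then reads off
\[
\uptau(\omega_Y)=\sum_{e\geq e_0}\lambda^e(F^e_*c^t\omega_Y)=\sum_{e\geq e_0}\omega_f\otimes f^*\kappa^e(F^e_*c^t\omega_X)=\omega_f\otimes f^*\uptau(\omega_X)
\]
directly from the factorization in \autoref{le.CMtracefacorization}.

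\textbf{Your route.} You use only the minimality characterization of $\uptau$.
\begin{enumerate}
\item The inclusion $\subseteq$ follows from the factorization alone, since $\lambda(F_*(\omega_f\otimes f^*M))\subseteq\omega_f\otimes f^*\kappa(F_*M)$. So it needs neither the $F$-injectivity hypothesis nor test elements.
\item The inclusion $\supseteq$ comes from the colon module $M=\{m\in\omega_R : \omega_f\otimes m\subseteq\uptau(\omega_S)\}$. It is $\kappa$-stable because surjectivity of $\gamma$ upgrades the inclusion above to the equality $\lambda(F_*(\omega_f\otimes f^*M))=\omega_f\otimes f^*\kappa(F_*M)$.
\item $M$ is generically full because $\uptau(\omega_S)\otimes_R R_\eta=\omega_S\otimes_R R_\eta$ at each generic point $\eta$ of $X$. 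This is the same input the paper turns into the element $x$ in its first claim.
\end{enumerate}

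\textbf{Comparison.} Your route shows clearly which hypothesis feeds which inclusion, and it only needs $\gamma^1$ to be surjective. The paper's route is a one-line computation once $c$ is in hand. Your remark that surjectivity of the relative Cartier operator on a fiber is governed by $F$-injectivity after base change to $\kappa(x)^{1/p}$ is sound. The paper uses the perfect closure instead, which avoids that identification.

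\textbf{Points to fix in the write-up.}
\begin{itemize}
\item Replace the vague phrase ``modulo the image'' by the displayed equality above. Use $R$-flatness of $\omega_f$ so that $\omega_f\otimes_R M\subseteq\omega_S$ and $\omega_f\otimes_R\kappa(F_*M)\subseteq\omega_S$ are genuine submodules.
\item Justify that forming $M$ commutes with localization on $R$. Choose finitely many $S$-generators $\delta_j$ of $\omega_f$. Then $M$ is the intersection of the kernels of the $R$-linear maps $\omega_R\to\omega_S/\uptau(\omega_S)$, $m\mapsto\delta_j\otimes m$. Hence $M_\eta=\omega_{R_\eta}$.
\item The reduction to local rings is unnecessary.
\item The factorization of $\lambda$ that you call ``usual'' is exactly what \autoref{le.CMtracefacorization}, via \autoref{dualitylemma}, has to establish. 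You should cite or prove it rather than take it for granted.
\end{itemize}
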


Our proof has two main steps. First, we make essential use of duality theory (for Cohen–Macaulay maps) to express explicitly the Cartier operator on $Y$ in terms of the Cartier operator of $X$ and the relative Cartier operator. This is why our result needs the finite-type hypotheses. Then, the $F$-injectivity hypothesis on the fibers is engineered for the relative Cartier operator to be surjective. The formula then follows by direct calculation once one constructs a common test element for $X$ and $Y$. The $F$-rationality hypothesis on generic fibers is used to ensure this, as well as for $Y$ to also be reduced and Cohen–Macaulay.

We note that Enescu's result does not require the morphism to be Cohen–Macaulay (in fact, much of Enescu's paper is devoted to showing that the relative Frobenius is \emph{parameter pure} in the situation he considers), so our assumption about the morphism can possibly be weakened. We also note that Enescu has relaxed assumptions when the base is Gorenstein. It is plausible that our hypothesis on fibers can also be weakened in this case.

In a slightly different direction, we also ask how the $F$-rational signature behaves in such a situation. The $F$-rational signature is another prominent invariant that quantifies $F$-rationality; see \cite{HochsterYaoRationalSignature}, \cite{SannaiDualFsignature}, and \cite{SmirnovTuckerFrationalSignature} for background on this notion. This is probably difficult. For instance, the invariance of the rational signature for a regular morphism was only proved recently by Shiji Lyu \cite[Theorem 7.2]{LyuGammaConstructionAndRationalSignature}.

For the rest of this note, all schemes and rings are defined over $\mathbb{F}_p$ (i.e., they have equal characteristic $p>0$). Given a scheme or ring $X$, we denote the $e$-th iterate of its (absolute) Frobenius endomorphism by $F^e=F^e_X\:X \to X$, and we use the usual shorthand notation $q \coloneqq p^e$. It is said that $X$ is $F$-finite if $F$ is a finite morphism.

\section{Preliminaries}
In this section, we briefly explain the ingredients and concepts involved in the formulation of our Main Theorem. We start with the following basic observation.

\begin{lemma}
\label{AssPrimesCanonicalMinimal}
Let $R$ be an $F$-finite reduced Cohen–Macaulay ring with canonical module $\omega_R$. For $\p \in \Spec R$ with residue field $\kappa(\p)$, the following statements hold.
\begin{enumerate}
\item If $\p$ is non-minimal, then $\Hom_{R_\p}(\kappa(\p), \omega_{R_\p}) = 0$. 
\item If $\p$ is minimal, then $\Hom_{R_\p}(\kappa(\p), \omega_{R_\p}) = R_\p$.
\end{enumerate}
\end{lemma}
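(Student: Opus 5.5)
Both parts reduce to depth and dimension considerations for the local ring $R_\p$ together with the structure of $\omega_{R_\p}$. Note first that $\omega_{R_\p} \cong (\omega_R)_\p$ is a canonical module of the Cohen--Macaulay local ring $(R_\p, \p R_\p, \kappa(\p))$. Set $d = \dim R_\p$. Recall that $\omega_{R_\p}$ is a maximal Cohen--Macaulay $R_\p$-module: $\operatorname{depth} \omega_{R_\p} = d$ and its support is all of $\Spec R_\p$. Also, $\Hom_{R_\p}(\kappa(\p), \omega_{R_\p}) \neq 0$ exactly when $\p R_\p \in \operatorname{Ass} \omega_{R_\p}$, which happens exactly when $\operatorname{depth}\omega_{R_\p} = 0$.

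For (1), suppose $\p$ is non-minimal, so $d \geq 1$. Then $\operatorname{depth} \omega_{R_\p} = d \geq 1$, so there is an $\omega_{R_\p}$-regular element $x \in \p R_\p$. Any homomorphism $\varphi\: \kappa(\p) \to \omega_{R_\p}$ satisfies $x\varphi(1) = \varphi(x \cdot 1) = 0$. Since $x$ is a nonzerodivisor on $\omega_{R_\p}$, this forces $\varphi(1)=0$, hence $\varphi = 0$. Equivalently, every associated prime of a maximal Cohen--Macaulay module is minimal in $\Spec R_\p$.

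For (2), suppose $\p$ is minimal. Since $R$ is reduced, $R_\p$ is a reduced Artinian local ring, so $R_\p = \kappa(\p)$ is a field. A canonical module of a field is the field itself: it is a nonzero module with $\dim_{\kappa(\p)} \Hom(\kappa(\p), \omega) = 1$, i.e. type one. Hence $\omega_{R_\p} \cong R_\p$, and therefore
\[
\Hom_{R_\p}(\kappa(\p), \omega_{R_\p}) = \Hom_{R_\p}(R_\p, R_\p) = R_\p.
\]

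I do not expect a real obstacle here. The only step needing care is justifying that $(\omega_R)_\p$ is a canonical module of $R_\p$ and is maximal Cohen--Macaulay with full support. This is standard (Bruns--Herzog, Theorem 3.3.5), and I would simply cite it. The $F$-finiteness hypothesis is not needed for this lemma; it only guarantees that canonical modules exist in the setting of the paper.
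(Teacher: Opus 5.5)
Your proof is correct and is essentially the paper's argument: for (2) both observe that $R_\p$ is a field, so $\omega_{R_\p}\cong R_\p$, and for (1) both kill the image of $\varphi$ with a non-unit that acts injectively on $\omega_{R_\p}$. The difference is cosmetic: the paper takes an $R_\p$-regular element and cites Kov\'acs' Lemma 3.23 to see it cannot lie in the kernel of the nonzero composite $R_\p \to \kappa(\p) \to \omega_{R_\p}$, whereas you get an $\omega_{R_\p}$-regular element directly from $\omega_{R_\p}$ being maximal Cohen--Macaulay.
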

\begin{proof}
Suppose that $\p$ is minimal. Then, $\kappa(\p)=R_{\p}$ as $R$ is reduced. Hence, $\omega_{R_\p} = R_\p =\kappa(\p)$ and so part (b) follows at once. 

Assume now that $\p$ is not minimal. Since $R_\p$ is Cohen–Macaulay, it contains a regular element $x$ that is not a unit. Let $\varphi\: \kappa(\p) \to \omega_{R_\p}$ be a  non-zero $R_{\p}$-linear homomorphism. Composing it with the canonical quotient $R_\p \to \kappa(\p)$ yields a non-zero map $\varphi'\colon R_\p \to \omega_{R_\p}$ with $\ker \varphi' \supset \p$. However, $x \notin \ker \varphi'$ by \cite[Lemma 3.23]{KovacsRationalSings}, which is a contradiction.
\end{proof}

\autoref{AssPrimesCanonicalMinimal} shows that the definition of the adjoint test module $\uptau(\omega_X)$ in the sense of \cite[Definition 1.10]{BlickleStablerFunctorialTestModules} coincides with the one given in \cite[Definition 3.1]{BlickleTestIdealsViaAlgebras}, which we recall next.

\begin{definition}[Adjoint test modules and related notions]
\label{def.canonicalsheafandtestmodule}
Let $\kay$ be an $F$-finite field and $X$ be a reduced Cohen–Macaulay scheme of finite type over $\kay$.
\begin{enumerate}[(a)]
    \item Since $\kay$ is $F$-finite, we can fix compatible isomorphisms of $\kay$-modules
    \[
    \alpha \coloneqq \alpha_e\colon  \kay \xrightarrow{\sim} F_\kay^{e,!} \kay,
    \]
    for each $e\in \bN$.
    \item  Write $u\: X \to \Spec \kay$ for the structural map. We then let 
    \[
    \omega_X \coloneqq u^! \kay
    \]
    be the \emph{canonical/dualizing sheaf} on $X$. Note that $\alpha_e$ induces an isomorphism 
    \[
   \xi \coloneqq \xi_e \coloneqq u^! \alpha_e \colon \omega_X \xrightarrow{\sim} F_X^{e,!} \omega_X.
    \] Composing with the Frobenius trace
    \[
\Tr \coloneqq \Tr^e \coloneqq  \Tr_{F^e_X, \omega_X} \:F^e_{*,X} F^{e,!}_X \omega_X \to \omega_X
    \]
    yields the ($e$-th power of the) \emph{Cartier operator} on $X$
    \[
    \kappa^e \coloneqq \kappa^e_X \coloneqq \Tr \circ F^e_{X,*} \xi \: F^e_{X,*} \omega_X \to \omega_X
    \]
    In the above, we used $\xi$ to construct $\kappa=\kappa^1$, but in reality, one determines the other. In fact, $\xi$ can be recovered from $\kappa$ through the formula
    \[
    \xi(m) = [r \mapsto \kappa^e(F^e_*rm)]
    \]
on local sections. In other words, $\kappa$ and $\xi$ are adjoint to one another with respect to $F^e_* \dashv F^{e,!}$. This defines a Cartier module $(\omega_X,\kappa)$ on $X$.
    \item $X$ is said to be \emph{$F$-injective} if $\kappa$ is surjective.
    \item The \emph{adjoint test module} $\uptau(\omega_X)$ is defined as the smallest Cartier submodule $\sF \subset \omega_X$ (i.e., the smallest $\sO_X$-submodule for which $\kappa(F_*\sF) \subset \sF$) such that $\sF_\eta = \omega_{X, \eta}$ at every generic point $\eta \in X$.
    \item $X$ is said to be \emph{$F$-rational} if $\uptau(\omega_X) = \omega_X$.
    \item In particular, the existence of $\uptau$ may be checked locally on an open affine covering, and we may glue to obtain $\uptau(\omega_X)$. Letting $X = \Spec R$ be such an affine chart, there is $c \in R$ avoiding every minimal prime such that $\uptau(\omega_{R_c}) = \omega_{R_c}$ and for any such $c$ we have 
    \[ \uptau(\omega_R) = \sum_{e \geq e_0} \kappa^e(F_*^e c^t \omega_R)\]
    for $t \gg 0$ and $e_0 \gg 0$; see \cite{BlickleTestIdealsViaAlgebras}. Such an element $c$ is called a \emph{test element}.
\end{enumerate}
\end{definition}

\begin{definition}
A finite-type morphism of locally noetherian schemes is \emph{Cohen–Macaulay} if it is flat and its fibers are Cohen–Macaulay (in particular, locally noetherian).
\end{definition}

\begin{definition}
A finite-type morphism is said to be \emph{of relative dimension $n$} if every non-empty fiber has dimension $n$.
\end{definition}

\begin{setup}\label{set.frobeniustrace}
    With notation as in \autoref{def.canonicalsheafandtestmodule}, if $f\: Y \to X$ is a Cohen–Macaulay morphism of relative dimension $n$, then it comes equipped with a dualizing sheaf $\omega_f$ and, moreover, we can define 
\[
\omega_Y \coloneqq f^! \omega_X = \omega_f \otimes f^\ast \omega_X,
\]
where we use the fact that $X$ is Cohen–Macaulay for the last equality; see \cite[Theorem 4.3.3]{ConradGDualityAndBaseChange}. Let $v\: Y \to \Spec \kay$ be the structural map. Then $\omega_Y \cong v^! \kay$. We use this isomorphism  to define an isomorphism $\zeta \: \omega_Y \to F^{e,!}_Y \omega_Y$. Composing it with the appropriate Frobenius trace yields the Cartier operator $\lambda^e =\lambda_Y^e \: F^e_* \omega_Y \to \omega_Y$. In other words, we define $\lambda$ to be adjoint to $\zeta$. Of course, under this isomorphism, $\lambda^e_Y$ would correspond to the Cartier operator $\kappa_Y^e$ defined as in \autoref{def.canonicalsheafandtestmodule}. However, for the transformation rule to be an equality on the nose rather than an isomorphism, we need to define $\omega_Y$ and its Cartier operator in this way.
\end{setup}

\section{Proof of the Main Theorem}

To prove our Main Theorem, we need to first establish three lemmas. The first one can be extracted from \cite[Theorem 4.3.3]{ConradGDualityAndBaseChange}, and we need it to show our second key lemma. We include a proof for the convenience of the reader.

\begin{lemma}
\label{dualitylemma}
Let $f\colon \Spec S \to \Spec R$ be a Cohen–Macaulay morphism of relative dimension $n$ that factors as $f =g \circ a$, where $g\colon \Spec T \to \Spec R$ is Cohen–Macaulay and $a \colon \Spec S \to \Spec T$ is finite dominant.  We have a natural isomorphism $a^! \omega_g \otimes_S a^\ast g^\ast  \to a^! \circ (\omega_g \otimes_T g^\ast )$ given by the canonical isomorphisms
\begin{equation*}
    \begin{split}
    \Hom_T(a_{\ast}S, \omega_{g}) \otimes_{S} a^\ast g^\ast M &\longrightarrow \Hom_T(a_{\ast}S, \omega_{g} \otimes_T g^\ast M)\\
    [s \mapsto \varphi(s)] \otimes s' \otimes t \otimes m &\longmapsto [s \mapsto \varphi(ss') \otimes t \otimes m].
    \end{split}
\end{equation*}
\end{lemma}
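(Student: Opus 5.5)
The plan is to show first that the displayed map is a well-defined $S$-linear homomorphism, natural in the $R$-module $M$, and then that it is bijective. Bijectivity will follow from finiteness of $a$ together with the flatness and Cohen--Macaulayness of the maps involved. Throughout, $a^!(-) = \Hom_T(a_\ast S, -)$ for the finite map $a$, and $\omega_g$ is the relative dualizing module of $g$; since $g$ is Cohen--Macaulay, $\omega_g$ is a finitely generated $T$-module that is flat over $R$. Also $a^\ast g^\ast M = S \otimes_T T \otimes_R M \cong S \otimes_R M$.

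\textbf{Well-definedness and naturality.} I will check directly that the formula $[s\mapsto \varphi(s)]\otimes s'\otimes t\otimes m \mapsto [s\mapsto \varphi(ss')\otimes t\otimes m]$ is balanced over $S$ and over $T$. For the $T$-balance, moving $t$ into $s'$ gives $\varphi(ss't) = t\varphi(ss')$ by $T$-linearity of $\varphi$, and this is the same as putting $t$ into the factor $\omega_g\otimes_T g^\ast M$. For the $S$-balance, the $S$-action on $\Hom_T(a_\ast S,\omega_g)$ is precomposition with multiplication, which is exactly what the formula uses. The output is $T$-linear in $s$, and the map is visibly natural in $M$. This is routine.

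\textbf{Bijectivity.} Since $T$ and $S$ are noetherian and $a_\ast S$ is a finite $T$-module, the question is local on $\Spec T$ and I may localize at primes of $T$. The key reduction is to $M$ finitely presented. The source $\Hom_T(a_\ast S,\omega_g)\otimes_R M$ commutes with direct limits in $M$. The target also does: $a_\ast S$ is finitely presented over $T$, so $\Hom_T(a_\ast S,-)$ commutes with filtered colimits. Hence it suffices to treat finitely presented $M$. Take a presentation $R^m\to R^k\to M\to 0$. Both functors are right exact in $M$:
\begin{itemize}
\item The source is a tensor product in $M$, so it is right exact.
\item For the target, $\omega_g\otimes_R -$ is exact because $\omega_g$ is $R$-flat, so right exactness of $\Hom_T(a_\ast S,\omega_g\otimes_R -)$ comes down to the vanishing of $\mathrm{Ext}^1_T(a_\ast S,\omega_g\otimes_R N)$ for the relevant $N$.
\end{itemize}
For $M=R$ the map is visibly the identity, so the five lemma reduces everything to this right exactness.

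\textbf{Main obstacle.} The real content is the vanishing $\mathrm{Ext}^{i}_T(a_\ast S,\omega_g\otimes_R N)=0$ for $i>0$ and all $R$-modules $N$. I expect this step to be the hard one. My plan is to use that $a_\ast S$ is Cohen--Macaulay of full relative dimension $n$ over $R$ on each fiber. This holds because $f$ and $g$ are both Cohen--Macaulay of relative dimension $n$ and $a$ is finite dominant, so $a_\ast S$ is $R$-flat with maximal Cohen--Macaulay fibers over $T\otimes_R\kappa(\p)$. By fiberwise duality, $\mathrm{Ext}^i_{T\otimes\kappa(\p)}(a_\ast S\otimes\kappa(\p),\omega_{g}\otimes\kappa(\p))=0$ for $i>0$, since $\omega_g\otimes\kappa(\p)$ is the canonical module of the Cohen--Macaulay fiber. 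I would then lift this vanishing to $\mathrm{Ext}^i_T(a_\ast S,\omega_g\otimes_R N)=0$, using $R$-flatness of both modules and a base-change/local criterion argument: first for $N=\kappa(\p)$, then for finite-length $N$ by d\'evissage, then in general by limits and Nakayama. This is exactly the content of \cite[Theorem 4.3.3]{ConradGDualityAndBaseChange}, which I would cite for this step if the direct argument gets too long.
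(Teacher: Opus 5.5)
Your proof is correct in outline, but it follows a different route from the paper.

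\textbf{The paper's route.} The paper embeds $\Spec T$ into a smooth $\Spec P$ and writes $\omega_g=\Ext^n_P(i_\ast T,\omega_\pi)$. It then runs the Grothendieck spectral sequence for $\RHom_T(a_\ast S,\RHom_P(i_\ast T,\omega_\pi))\cong\RHom_P(i_\ast a_\ast S,\omega_\pi)$. Because both $g$ and $f$ are Cohen--Macaulay, the sequence collapses, giving $\Ext^{>0}_T(a_\ast S,\omega_g)=0$. The same argument on each fiber gives $\Ext^1_{T_x}(a_{x,\ast}S_x,\omega_{g_x})=0$. The step from this fiberwise vanishing to the base change isomorphism for $\Hom$ is quoted from Altman--Kleiman.

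\textbf{Your route.} You get the fiberwise vanishing from local algebra: fibers of $a_\ast S$ are maximal Cohen--Macaulay over fibers of $T$, and $\Ext^{>0}$ from such a module into the canonical module vanishes. You then reprove by hand the needed case of the base change theorem. Vanishing of $\Ext^1_T(a_\ast S,\omega_g\otimes_R N)$ for all $N$ makes $N\mapsto\Hom_T(a_\ast S,\omega_g\otimes_R N)$ exact. Hence the comparison map, an isomorphism for free $M$, is one for finitely presented $M$ and then for all $M$.

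\textbf{What each buys.} Your version avoids the auxiliary smooth embedding and is self-contained, but you must do the base-change work yourself. The paper's version is shorter because it outsources exactly that step.

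\textbf{Two places that need more than you wrote.}
\begin{enumerate}
\item For the MCM claim, record that $a$ is surjective (since $T/I\hookrightarrow S$ is finite), hence surjective on fibers. From this, $g$ has pure relative dimension $n$, and $\dim(S\otimes\kappa(\p))_{\mathfrak n}=\dim(T\otimes\kappa(\p))_Q$ whenever $\mathfrak n$ lies over $Q$. The paper also uses this purity, in its ``supported in a single degree'' step.
\item More importantly, ``limits and Nakayama'' does not by itself pass from vanishing for all $N/\p^kN$ to vanishing for $N$. Here is one way to close it.
\begin{enumerate}
\item Localize at a maximal ideal $\mathfrak M$ of $T$ lying over $\p$. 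Take $N$ finitely generated over $R_\p$ and set $E=\Ext^1_T(a_\ast S,\omega_g\otimes_R N)_{\mathfrak M}$.
\item Vanishing for $N/\p^kN$, together with $R$-flatness of $\omega_g$, makes the map $\Ext^1_T(a_\ast S,\omega_g\otimes_R\p^kN)_{\mathfrak M}\to E$ surjective.
\item Artin--Rees, applied to the cocycles in $\Hom_T(P_1,\omega_g\otimes_R N)_{\mathfrak M}$ for a finite free resolution $P_\bullet$ of $a_\ast S$, puts the image of that map inside $\p^{k-h}E$ for some fixed $h$.
\item Hence $E=\p E$, and only now does Nakayama over $T_{\mathfrak M}$ give $E=0$.
\end{enumerate}
A Mittag-Leffler argument on $\varprojlim_k$, combined with faithful flatness of the $\p$-adic completion of $T_{\mathfrak M}$, also works.
\end{enumerate}
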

\begin{proof}
Since $a$ is finite dominant, we may factor it as $\Spec S \to \Spec T/I \to \Spec T$, where $I$ is nilpotent and $T/I \subset S$ is finite injective. In particular, $a$ is of relative dimension zero. We conclude that $g$ is of relative dimension $n$.

Since $g$ is of finite type, we may factor it as 
\[
\Spec T \xrightarrow{i} \Spec P \xrightarrow{\pi} \Spec R,
\]
where $i$ is a closed immersion and $\pi$ is smooth. By duality theory, we have 
\begin{equation}\label{eq.dualizing}\omega_{g} = \Ext^{n}_P(i_\ast T, \omega_{\pi})
\end{equation}
and, since $g$ is Cohen–Macaulay, $n$ is the only degree where the $\Ext$ is non-vanishing. By (derived) tensor-hom adjunction, we have an isomorphism
\[
\RHom_T(a_{\ast}S, \RHom_P(i_\ast T, \omega_{\pi})) \cong \RHom_P(i_\ast a_{\ast} S, \omega_{\pi}),
\]
where the left-hand term is given by a Grothendieck spectral sequence $E_2^{p,q}$ that converges to the right-hand term. By the above, the only non-zero $E_2$-terms are of the form $E_2^{p,n}$, whence the sequence collapses at the $E_2$-page.
Moreover, since $f$ is Cohen–Macaulay, the right-hand side is supported in a single degree $n$. In summary, we obtain the following isomorphism from tensor-hom adjunction:
\[   
    \Hom_T(a_{\ast}S, \Ext_P^n(i_\ast T, \omega_{\pi})) \cong \Ext^n_P(i_\ast a_{\ast} S, \omega_{\pi}).
\]

Note that, due to the previous spectral sequence analysis, we have that 
\[
\Ext^1_T(a_{\ast}S, \Ext_P^n(i_\ast T, \omega_{\pi})) = 0
\] 
In particular, $\omega_g = \Ext_P^n(i_\ast T, \omega_{\pi})$ and $S$ are both $R$-flat. Moreover, if we pass to a fiber $g_x\colon \Spec T_x \to \Spec \kappa(x)$ of $g$, then a similar spectral sequence argument shows that 
\[
\Ext^1_{T_x}(a_{x,\ast} S_x, \omega_{g_x}) = 0,
\] 
where $\omega_{g_x}$ coincides with the pullback of $\omega_g$ to $T_x$. Applying \cite[Theorem 1.9, (ii) (3)$\Rightarrow$(2)]{altmankleimancompactifyinpicardscheme} with $g = \id$, we conclude that the canonical map
\begin{equation}\Hom_T(a_{\ast}S, \Ext_P^n(i_\ast T, \omega_{\pi})) \otimes_S a^\ast g^\ast M \longrightarrow \Hom_T(a_{\ast}S, \Ext_P^n(i_\ast T, \omega_{\pi}) \otimes_T  g^\ast M)
\end{equation}
is a natural isomorphism. With \autoref{eq.dualizing}, the proof is complete.
\end{proof}

The next lemma will express the Cartier operator of $Y$ in terms of those of $X$ and $f$. To explain what the latter is, let us recall the cartesian diagram defining the relative Frobenius of a map $f\: Y \to X$:
\begin{equation}\label{relFrobenius}
\xymatrixcolsep{5pc}\xymatrixrowsep{3pc}\xymatrix{
Y \ar@/_/[ddr]_-{f} \ar@/^/[drr]^-{F^e_Y} \ar@{.>}[dr]|-{F^e_{f}}\\
&Y^{(q)} \ar[d]_-{f^{(q)}} \ar[r]^-{G^e_f}  & Y\ar[d]^-{f} \\
&X \ar[r]^-{F^e_X} &X}
\end{equation}
Our next goal is to give a description of the Cartier operator $\lambda$ on $Y$ in terms of $\kappa$—the one on $X$—and the \emph{($e$-th) relative Cartier operator} 
\[
\gamma \coloneqq \gamma^e \: F_{f,\ast}^e \omega_f \to G^{e,\ast}_f \omega_f = \omega_{f^{(q)}},
\] 
where, by base change (\cite[Theorem 3.6.1]{ConradGDualityAndBaseChange}), we identify $G_f^{e,\ast} \omega_f$ with $\omega_{f^{(q)}}$. Under such identification, the composition $f = f^{(q)} \circ F_f^e$ induces an isomorphism 
\[\Gamma\: \omega_f \xrightarrow{\sim} F_f^{e,!} G_f^{e,\ast} \omega_{f}.
\]
We write $\gamma\: F_{f,\ast}^e \omega_f \to G^{e\ast}_f \omega_f$ for its adjoint and then have 
\[
\Gamma(\delta) = [s \mapsto \gamma(s\delta)].
\]

\begin{lemma}
\label{le.CMtracefacorization}
With notation as in \autoref{set.frobeniustrace} and \autoref{relFrobenius}, the Cartier operator $\lambda\:F_{Y,\ast}^e \omega_Y \to \omega_Y$ admits the following factorization.
\begin{align}\label{eq.tracefactorizationCM}
    F_\ast^e \omega_Y= G^e_{f,\ast} F^e_{f,\ast} \omega_Y = G^e_{f,\ast} F^e_{f,\ast} (\omega_f \otimes f^\ast \omega_X) = {}& G^e_{f,\ast} F^e_{f,\ast} (\omega_f \otimes F_f^{e,\ast} f^{(q),\ast} \omega_X)\\ \nonumber
     \xleftarrow[\sim]{\text{(affine) projection formula}} {} & G^e_{f,\ast}( F^e_{f,\ast}(\omega_f) \otimes f^{(q),\ast} \omega_X) \\\nonumber
     \xrightarrow{G^e_{f,\ast}(\gamma \otimes \id)} {} & G^e_{f,\ast} (\omega_{f^{(q)}} \otimes f^{(q),\ast} \omega_X) \\\nonumber
     = {}&  G^e_{f,\ast} (G^{e,\ast}_f \omega_f \otimes f^{(q),\ast} \omega_X) \\ \nonumber
     \xleftarrow[\sim]{\text{(affine) projection formula}} {} & \omega_f \otimes G^e_{f,\ast} f^{(q),\ast} \omega_X\\\nonumber
     \xleftarrow[\sim]{canonical} {} &
\omega_f \otimes f^\ast F^e_\ast \omega_X \\\nonumber
\xrightarrow{\id \otimes f^\ast \kappa}{}&\omega_f \otimes f^\ast \omega_X = \omega_Y.
\end{align}
\end{lemma}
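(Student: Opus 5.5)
The plan is to identify both sides by the adjunctions that define them. By construction, $\lambda$ is the adjoint of $\zeta\colon \omega_Y \xrightarrow{\sim} F^{e,!}_Y\omega_Y$, and $\zeta$ comes from $\omega_Y\cong v^!\kay$ with $v = u\circ f$. The composite in \autoref{eq.tracefactorizationCM} is a map $F^e_*\omega_Y\to\omega_Y$, so it corresponds under $F^e_{Y,*}\dashv F^{e,!}_Y$ to a map $\omega_Y\to F^{e,!}_Y\omega_Y$. It therefore suffices to show that this adjoint map equals $\zeta$. Equivalently, we show that the composite equals $\Tr_{F^e_Y,\omega_Y}\circ F^e_*\zeta$ on local sections.

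The factorization $F^e_Y = G^e_f\circ F^e_f$ from \autoref{relFrobenius} gives $F^{e,!}_Y = F^{e,!}_f G^{e,!}_f$. The trace for $F^e_Y$ is correspondingly the composite of the traces for $F^e_f$ and $G^e_f$ (transitivity of trace). On the other side we have $v\circ F^e_Y = F^e_{\kay}\circ v$, and we write
\[
F^{e,!}_Y v^! = F^{e,!}_f\, f^{(q),!}\, F^{e,!}_X u^!.
\]
Here we use $f\circ G^e_f = F^e_X\circ f^{(q)}$ and the base-change identification $G^{e,!}_f f^! \cong f^{(q),!}F^{e,!}_X$. Since $f$ is Cohen--Macaulay and flat, $f^! = \omega_f\otimes f^*(-)$, and the same holds for $f^{(q)}$ with $\omega_{f^{(q)}} = G^{e,*}_f\omega_f$. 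Under these identifications, $\zeta$ decomposes into two pieces. The first is $\omega_f\otimes f^*\xi$, the piece coming from $X$; after adjunction along $F^e_X$ and flat base change, this gives the factor $\id\otimes f^*\kappa$ in the last line. The second is the isomorphism $\Gamma\colon \omega_f\to F^{e,!}_f G^{e,*}_f\omega_f$, which by definition is adjoint to $\gamma$.

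Next I would localize to affines: $X=\Spec R$, $Y=\Spec S$, with $f$ factored as a finite map followed by a smooth one. Over the finite part, $F^{e,!}_f(-)=\Hom(F^e_{f,*}\sO_Y,-)$, and \autoref{dualitylemma} (with the finite map $F^e_f$, which is dominant as a universal homeomorphism, and $g = f^{(q)}$) gives
\[
F^{e,!}_f\bigl(\omega_{f^{(q)}}\otimes f^{(q),*}M\bigr)\cong F^{e,!}_f\omega_{f^{(q)}}\otimes F^{e,*}_f f^{(q),*}M,
\]
with the explicit formula from that lemma. Unwinding the trace map on $\Hom$'s as evaluation at $1$, a local section $F^e_*(\delta\otimes s\otimes m)$ is sent first to $\gamma(s\delta)\otimes m$ and then to $\delta'\otimes\kappa(\cdot)$. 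One then checks this is exactly evaluation at $1$ of $\Gamma(\delta)\otimes\xi(m)$ transported through \autoref{dualitylemma}. That matches each arrow of \autoref{eq.tracefactorizationCM}, including the two projection-formula isomorphisms and the canonical base-change isomorphism $f^*F^e_*\omega_X\cong G^e_{f,*}f^{(q),*}\omega_X$.

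The main obstacle is compatibility of the various duality isomorphisms. We need the identification $\omega_Y=f^!\omega_X=\omega_f\otimes f^*\omega_X$ to be compatible with composition $(F^e_Y)^!v^!\cong v^!(F^e_\kay)^!$ and with the base-change isomorphism defining $\omega_{f^{(q)}}$. I would handle this by citing the pseudofunctoriality and base-change compatibilities in \cite{ConradGDualityAndBaseChange} (Theorems 3.6.1 and 4.3.3), and by doing the remaining verification concretely in the affine, finite-plus-smooth factorization via \autoref{dualitylemma}. There all maps are explicit $\Hom$-evaluations, and the check reduces to the formula $\Gamma(\delta)=[s\mapsto\gamma(s\delta)]$ together with $\xi(m)=[r\mapsto\kappa(F_*rm)]$.
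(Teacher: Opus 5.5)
Your proposal is correct and follows essentially the paper's route: both identify $\lambda$ as the adjoint of $\omega_Y\xrightarrow{f^!\xi}f^!F^{e,!}_X\omega_X\cong F^{e,!}_Y\omega_Y$. Both then unwind this isomorphism on affines via $\Gamma\otimes\id$, \autoref{dualitylemma} for the finite dominant map $F^e_f$ with $g=f^{(q)}$, flat base change for $\Hom$ along $f$, and $F^{e,!}_Y=F^{e,!}_fG^{e,!}_f$, and evaluate at $1$ to read off the factorization through $\gamma\otimes\id$ and $\id\otimes f^\ast\kappa$.

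The only quibble is terminological: $G^{e,!}_f f^!\cong f^{(q),!}F^{e,!}_X$ is pseudofunctoriality for the commutative square $f\circ G^e_f=F^e_X\circ f^{(q)}$, not a base-change isomorphism.
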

\begin{proof}
We may verify this locally and thus assume that $f$ is an affine map $\Spec S \to  \Spec R$. We let $T$ be such that $\Spec T = Y^{(q)}$. By duality theory, we have an isomorphism 
\[
f^! F_X^{e,!} \omega_X \xrightarrow{\sim} F^{e,!}_Y f^! \omega_X = F^{e,!}_Y \omega_Y.
\] 
We assert that the adjoint of the composition \begin{equation}\label{eq.dualitycomposition}
    \omega_Y \xrightarrow{f^! \xi} f^! F^{e,!}_X \omega_X \xrightarrow{\sim} F_Y^{e,!} \omega_Y
\end{equation}
is the sequence of compositions \autoref{eq.tracefactorizationCM}. First, we describe the composition \autoref{eq.dualitycomposition} explicitly. 

\begin{claim} The composition in \autoref{eq.dualitycomposition} is given by the isomorphism
\begin{equation*}
\begin{split}
\omega_f \otimes f^\ast \omega_X &\longrightarrow F_X^{e,!}\big(\omega_f \otimes f^\ast \omega_X\big)\\
\delta \otimes s \otimes m &\longmapsto \biggl[t \mapsto \sum_i \delta_i \otimes v_i \otimes \kappa(r_i m)\biggr]
\end{split}
\end{equation*}
where we set $\gamma(st \delta) = \sum_i \delta_i \otimes r_i \otimes v_i \in \omega_f \otimes_S T$.    

\begin{proof}[Proof of the claim]

We start with the isomorphism
\begin{equation*}
\begin{split}
    \omega_Y = \omega_f \otimes f^\ast \omega_X &\xrightarrow{\id \otimes f^\ast \xi} f^! F_X^{e,!} \omega_X\\
    \delta \otimes  s \otimes m &\xmapsto{\phantom{\id \otimes f^\ast K}} \delta \otimes s \otimes [r \mapsto \kappa(rm)].
\end{split}
\end{equation*}
Now, we compose with $\Gamma \otimes \id$ to obtain an isomorphism
\begin{equation*}
    \begin{split}
        f^! F_X^{e,!} \omega_X = \omega_f \otimes_S f^\ast F_X^{e,!} \omega_X &\longrightarrow F^{e,!}_f G_f^{e,\ast} \omega_f \otimes_S f^\ast F_X^{e,!} \omega_X\\
        \delta \otimes s \otimes [r \mapsto \kappa(rm)] &\longmapsto [t \mapsto \gamma(t\delta)] \otimes s \otimes [r \mapsto \kappa(rm)].
    \end{split}
\end{equation*}
Since $f^\ast = F_f^{e\ast} f^{(q),\ast}$, we obtain an isomorphism
\begin{align*}
    F^{e,!}_f G_f^{e,\ast} \omega_f \otimes_S f^\ast F_Y^{e,!} \omega_X &\longrightarrow F^{e,!}_f G_f^{e,\ast} \omega_f  \otimes_S F_f^{e\ast} f^{(q),\ast} F_X^{e,!} \omega_X =\\& F^{e,!}_f G_f^{e,\ast} \omega_f \otimes_{F^e_\ast S} F^e_\ast S \otimes_T T \otimes_{F_\ast^e R} F^{e,!}_X \omega_X\\
    [t \mapsto \gamma(t\delta)] \otimes s \otimes [r \mapsto \kappa(rm)] &\longmapsto [t \mapsto \gamma(t\delta)] \otimes s \otimes 1 \otimes [r \mapsto \kappa(rm)].
\end{align*}
We now apply \autoref{dualitylemma} to produce the isomorphism
\begin{equation*}
    \begin{split}
    F^{e,!}_f G_f^{e,\ast} \omega_f  \otimes_S F_f^{e\ast} f^{(q),\ast} F^{e,!} \omega_X &\longrightarrow F^{e,!}_f \big(G_f^{e,\ast} \omega_f  \otimes_T f^{(q),\ast} F^{e,!} \omega_X\big)\\
    [t \mapsto \gamma(t\delta)] \otimes s \otimes 1 \otimes [r \mapsto \kappa(rm)] &\longmapsto 
    \big[t \mapsto \gamma(st\delta) \otimes 1 \otimes [r \mapsto \kappa(rm)]\big] .  \end{split}
\end{equation*}
Since $f$ is flat and $F^{e,!} \omega_X = \Hom_R(F_\ast^e R, \omega_X)$, we can use flat base change for hom to write the isomorphism
\begin{equation*}
    \begin{split}
        F^{e,!}_f \big(G_f^{e,\ast} \omega_f  \otimes_T f^{(q),\ast} F^{e,!} \omega_X\big) &\longrightarrow F_f^{e,!} \big(G_f^{e,\ast} \omega_f  \otimes_T G_f^{e,!} f^\ast \omega_X\big)\\
         \big[t \mapsto \gamma(st\delta) \otimes 1 \otimes [r \mapsto \kappa(rm)]\big]
        &\longmapsto \Big[t \mapsto \big[\gamma(st\delta) \otimes [r \otimes v \mapsto \kappa(rm) \otimes v]\big] \Big].
    \end{split}
\end{equation*}

Let us now write \[
\gamma(st\delta) = \sum_i \delta_i \otimes r_i \otimes v_i \in G_f^{e\ast} \omega_f = \omega_f \otimes_{S} T.
\]
Then, using $T$-linearity, we may rewrite the image of the above as
\[
t \mapsto \sum_i \delta_i \otimes [r \otimes v \mapsto \kappa(rr_im) \otimes vv_i].
\]
We apply \autoref{dualitylemma} once more to get an isomorphism
\begin{equation*}
    \begin{split}
        F_f^{e,!} \big(G_f^{e\ast} \omega_f  \otimes_T G_f^{e,!} f^\ast \omega_X\big) &\longrightarrow F_f^{e,!} G_f^{e,!} \big(\omega_f \otimes f^\ast \omega_X\big)\\
        \Bigg[t \mapsto \bigg[\sum_i \delta_i \otimes [r \otimes v \mapsto \kappa(rr_im) \otimes vv_i]\bigg] \Bigg]&\longmapsto \Bigg[t \mapsto \bigg[r \otimes v \mapsto \sum_i \delta_i \otimes v v_i \otimes \kappa(rr_i m)\bigg] \Bigg].
    \end{split}
\end{equation*}
The isomorphism $F_f^{e,!} G_f^{e,!} \cong F_Y^{e,!}$ is given by tensor-hom adjunction and yields the isomorphism
\begin{equation*}
    \begin{split}
     F_f^{e,!} G_f^{e,!} \big(\omega_f \otimes f^\ast \omega_X\big) &\longrightarrow F_X^{e,!}\big(\omega_f \otimes f^\ast \omega_X)\big)\\
     \Bigg[t \mapsto \bigg[r \otimes v \mapsto \sum_i \delta_i \otimes v v_i \otimes \kappa(rr_i m)\bigg] \Bigg]&\longmapsto \Bigg[rta^q \mapsto \sum_i \delta_i \otimes v v_i \otimes \kappa(r r_i m)\Bigg].
    \end{split}
\end{equation*}
Composing all these isomorphisms gives the desired formula.
\end{proof}
\end{claim}
From this, we see that the adjoint morphism of \autoref{eq.dualitycomposition} is given by
\begin{equation}
    \begin{split}
      \label{eq.tracefactorizationexplicitCM}
      F_{Y\ast}^e \big(\omega_f \otimes f^\ast \omega_X\big) &\longrightarrow  \omega_f \otimes f^\ast \omega_X\\
      F_{Y\ast}^e\big(\delta \otimes s \otimes m\big) &\longmapsto \sum_i \delta_i \otimes v_i \otimes \kappa(r_i m).
    \end{split}
\end{equation}
This is readily seen to be the same as \autoref{eq.tracefactorizationCM}.
\end{proof}

The final lemma is rather general and will be used to show that $\gamma$ is surjective.

\begin{lemma}
\label{le.fibernakayama}
Let $f \: \Spec S \to \Spec R$ be a map of noetherian rings and $\phi\: M \to N$ be a map of finitely generated $S$-modules. If $\phi$ is surjective after base change to all fibers of $f$ that contain a closed point of $\Spec S$, then it is surjective.
\end{lemma}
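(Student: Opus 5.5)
We argue via the cokernel. Put $C \coloneqq \operatorname{coker}\phi$, a finitely generated $S$-module. Since $-\otimes_R \kappa(\p)$ is right exact, $\operatorname{coker}(\phi \otimes_R \kappa(\p)) = C \otimes_R \kappa(\p)$ for every $\p \in \Spec R$. The hypothesis therefore says that $C \otimes_R \kappa(\p) = 0$ whenever the fiber $f^{-1}(\p)$ contains a closed point of $\Spec S$. We want to conclude that $C = 0$.

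Suppose instead that $C \neq 0$. Then $\operatorname{Supp} C = V(\operatorname{Ann}_S C)$ is a non-empty closed subset of $\Spec S$. Since $S$ is noetherian, this closed set contains a maximal ideal $\fran$ of $S$; indeed any prime $\q \supseteq \operatorname{Ann}_S C$ is contained in some maximal ideal, which again contains the annihilator. Let $\p \coloneqq f^{-1}(\fran) = \theta^{-1}(\fran)$, where $\theta \colon R \to S$ is the structure map. Then the fiber over $\p$ contains the closed point $\fran$, so by the hypothesis $C \otimes_R \kappa(\p) = 0$.

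It remains to derive a contradiction from this. Localize at $\fran$. Then $C_\fran \otimes_R \kappa(\p) = (C \otimes_R \kappa(\p))_\fran = 0$. On the other hand, $C_\fran \otimes_R \kappa(\p)$ surjects onto $C_\fran \otimes_{R_\p} R_\p/\p R_\p = C_\fran/\p C_\fran$. Here we use that $C_\fran$ is already an $R_\p$-module, because elements of $R \setminus \p$ map to units of $S_\fran$. Since $\theta(\p) \subseteq \fran$, we get $\p C_\fran \subseteq \fran C_\fran$. As $C_\fran/\p C_\fran = 0$, it follows that $C_\fran = \fran C_\fran$, so $C_\fran = 0$ by Nakayama's lemma over the local ring $S_\fran$, using that $C_\fran$ is finitely generated. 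This contradicts $\fran \in \operatorname{Supp} C$. Hence $C = 0$ and $\phi$ is surjective.

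There is no real obstacle in this argument. The only points needing care are the right exactness identification of the cokernel, and the observation that the fiber through $\fran$ is exactly the one over $\p = \theta^{-1}(\fran)$, which makes it one of the fibers covered by the hypothesis. Finite generation is used only for Nakayama's lemma and for the support being closed, which is what guarantees the support contains a maximal ideal.
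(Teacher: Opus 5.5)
Your proof is correct, and it takes a different route from the paper's.

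The paper invokes the Stacks Project variant of Nakayama's lemma (Tag 0GLX), applied with the ideal $\p S$ and the multiplicative set $R\smallsetminus\p$. For every point $\q$ of a fiber $f^{-1}(\p)$ that contains a closed point, this produces an element $s\in(R\smallsetminus\p)+\p S$ with $\q\in D(s)$ and $\phi$ surjective on $D(s)$. Hence $\phi$ is surjective on an open set containing every closed point of $\Spec S$, and therefore everywhere; the paper leaves this last step implicit.

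You instead pass to $C=\operatorname{coker}\phi$, choose a maximal ideal $\mathfrak{n}$ in its support, and apply the ordinary local Nakayama lemma over $S_{\mathfrak{n}}$ after checking that $C_{\mathfrak{n}}/\p C_{\mathfrak{n}}=0$.

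What each route buys:
\begin{itemize}
\item Your argument is more elementary and self-contained. It avoids the bookkeeping of lifting generators of $M$ and $N$ from the fiber, and it makes the reduction to closed points explicit.
\item It also visibly uses neither the noetherian hypothesis nor finite generation of $M$; only finite generation of $N$, hence of $C$, enters.
\item The paper's route gives slightly more: surjectivity on an explicit open neighborhood of each point of a good fiber. That extra information is not needed for the statement.
\end{itemize}

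Two cosmetic points. First, write $\p=\theta^{-1}(\mathfrak{n})$ (or $f(\mathfrak{n})$) rather than $f^{-1}(\mathfrak{n})$. Second, the map $C_{\mathfrak{n}}\otimes_R\kappa(\p)\to C_{\mathfrak{n}}/\p C_{\mathfrak{n}}$ is in fact an isomorphism, because $C_{\mathfrak{n}}$ is already an $R_\p$-module.
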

\begin{proof}
This follows from a variant of Nakayama's lemma (see \cite[\href{https://stacks.math.columbia.edu/tag/0GLX}{Lemma 0GLX}]{stacks-project}): If $\p \in \Spec R$ and $\q \in f^{-1}(\p)$, then $\q \cap R = \p$ and $\q \cap (R \smallsetminus \p) = \varnothing$. Assume that $x_1, \ldots, x_n$ are generators of $N$ base changed to the fiber $f^{-1}(\p)$, and that $y_1, \ldots, y_m$ are generators of $M$ base changed to the fiber with $y_i \mapsto x_i$ for $1 \leq i \leq n$. Then, the lemma applied with the ideal $\p S$ and the multiplicative system $R \smallsetminus \p$ shows that there is $s \in (R \smallsetminus \p) + \p S$ such that the $x_j, y_j$ are generators of $N_s$ and $M_s$, respectively. We assert that $\q \in D(s) \subset \Spec S$. Indeed, write $s = x + y$ with $x \in R \smallsetminus \p$ and $y \in \p S \subset \q$. If $s \in \q$, then $x \in \q$, but this is impossible as $x \in R \smallsetminus \p$ and $\q \cap (R \smallsetminus \p) = \varnothing$.
\end{proof}

We now have all the ingredients to proceed with our proof.

\begin{proof}[Proof of Main Theorem.]
The statement is local on $X$, so we may work in the affine case $f\: \Spec S \to \Spec R$. Note that $S$ is both Cohen–Macaulay and reduced. Indeed, it is Cohen–Macaulay by \cite[\href{https://stacks.math.columbia.edu/tag/045J}{Lemma 045J}]{stacks-project}. We only need to verify that it is $\mathbf{R}_0$; this follows from \cite[\href{https://stacks.math.columbia.edu/tag/031E}{Lemma 031E}]{stacks-project} and our hypothesis on generic fibers.

\begin{claim}
    There is $c \in R$ that is a common test element for $\omega_R$ and $\omega_S$.
\end{claim}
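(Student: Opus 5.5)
We need to find $c\in R$, not in any minimal prime of $R$, such that $\uptau(\omega_{R_c})=\omega_{R_c}$ and $\uptau(\omega_{S_c})=\omega_{S_c}$. By \autoref{def.canonicalsheafandtestmodule}(f), the image of $c$ in $S$ must then also avoid every minimal prime of $S$, and then $c$ is a test element for both. The plan is to shrink $\Spec R$ to a regular open set over which $S$ is also $F$-rational.

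\emph{Step 1: regular locus of $R$.} Since $R$ is reduced and $F$-finite, it is excellent, so its regular locus is a dense open subset containing all minimal primes. Choose $c_1\in R$ avoiding every minimal prime with $R_{c_1}$ regular. Regular rings are $F$-rational, so $\uptau(\omega_{R_{c_1}})=\omega_{R_{c_1}}$. Any further multiple $c=c_1c_2$ with $c_2$ avoiding the minimal primes keeps this property, because $F$-rationality localizes.

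\emph{Step 2: $F$-rationality of $S$ over an open subset of the base.} Let $\eta_1,\dots,\eta_r$ be the generic points of $\Spec R$. By hypothesis, the generic fibers $S\otimes_R\kappa(\eta_i)$ are $F$-rational. Set $\omega=\omega_{S_{c_1}}$. The set of primes of $S$ where $\uptau(\omega_S)\ne\omega_S$ is the closed set $V(J)$, where $J=\operatorname{Ann}(\omega_S/\uptau(\omega_S))$. Test modules commute with localization. I would argue that the generic fiber, viewed as a localization of $S$ at $R\smallsetminus\bigcup\eta_i$, is the localization $W^{-1}S$ with $W$ the image of that multiplicative set, and that $\uptau(\omega_{W^{-1}S})=W^{-1}\uptau(\omega_S)$. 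Granting this, $F$-rationality of the generic fibers gives $W^{-1}J=W^{-1}S$, so some $w\in W$ lies in $J$.

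The main obstacle is exactly this step: relating $\uptau(\omega_{S\otimes\kappa(\eta)})$ for the fiber ring to $\uptau(\omega_S)$, using the fiber's own canonical module and Cartier structure. Over the reduced base, $R_{\eta}=\kappa(\eta)$ is a field, so the fiber really is the localization $S_\eta$. I would check that the identification $\omega_{S_\eta}=\omega_f\otimes\omega_{R_\eta}$ matches the fiber's canonical module up to a unit, using the factorization of \autoref{le.CMtracefacorization} at $R_\eta$, where $\kappa$ is the field Cartier map. Cartier submodules are insensitive to twisting the structure map by a unit isomorphism. Hence $\uptau$ localizes compatibly, and the fiber's $F$-rationality means $\uptau(\omega_{S_\eta})=\omega_{S_\eta}$.

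\emph{Step 3: assemble.} Since $w\in R$ avoids every minimal prime, put $c=c_1w$. Then $R_c$ is regular, hence $F$-rational. Also $\uptau(\omega_{S_c})=\uptau(\omega_S)_c=\omega_{S_c}$, because $w\in J$. The element $c$ avoids every minimal prime of $S$: by flatness, minimal primes of $S$ contract to minimal primes of $R$ (going down), and $c$ is not in those. So $c$ is a common test element.
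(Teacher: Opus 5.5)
Your proposal is correct and follows essentially the paper's route: use the $F$-rationality of the generic fibers and the compatibility of $\uptau$ with localization to find $x\in R$ outside every minimal prime with $\uptau(\omega_{S_x})=\omega_{S_x}$, use going down to see that $x$ avoids the minimal primes of $S$, and multiply by a test element of $R$. The differences are minor: the paper handles one minimal prime at a time and then uses prime avoidance, whereas you localize once at $W=R\smallsetminus\bigcup\eta_i$ (so $W^{-1}S$ is the product of all generic fibers), you build $R$'s test element from the regular locus instead of citing its existence, and you explicitly check that the fiber's Cartier structure matches the localized $\lambda$ up to a unit, which the paper leaves implicit.
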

\begin{proof}[Proof of claim]
    By the $F$-rationality of the generic fibers, for each minimal prime $\p$ of $\Spec R$, there is $x_\p \notin \p$ such that $\uptau(\omega_{S} \otimes R_{x_\p}) = \omega_S \otimes R_{x_\p}$. By prime avoidance, there exists $x \in R$ such that $D(x) \subset \bigcup_\p D(x_\p)$ with $D(x)$ containing all the generic points $\p$ of $\Spec R$. It follows that $\uptau(\omega_S \otimes R_x) = \omega_S \otimes R_x$. By going down for flat maps, minimal primes of $S$ are mapped to minimal primes. Hence, $f^{-1}D(x)$ is dense in $\Spec S$. It follows that $x$ is a test element for $\omega_S$. If $y$ is a test element for $\omega_R$, then $c = xy$ is the desired common test element.
\end{proof}

\begin{claim}
   The relative Cartier operator $\gamma \: F_{f,\ast}^e \omega_f \to \omega_{f^{(q)}}$ is surjective.
\end{claim}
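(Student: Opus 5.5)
We will use \autoref{le.fibernakayama}, applied to the morphism $f^{(q)}\colon Y^{(q)} = \Spec T \to \Spec R$, with $\phi = \gamma\colon F^e_{f,\ast}\omega_f \to \omega_{f^{(q)}}$ regarded as a map of $T$-modules. Both modules are finitely generated over $T$: $\omega_{f^{(q)}} = G^{e,\ast}_f\omega_f$ is the pullback of a coherent sheaf, and $F^e_f$ is finite because $Y$ is of finite type over the $F$-finite field $\kay$. By that lemma, it then suffices to check surjectivity of $\gamma \otimes_R \kappa(\p)$ for every $\p \in \Spec R$ whose fiber $(f^{(q)})^{-1}(\p)$ contains a closed point of $\Spec T$.

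The first step is to identify these fibers. Since $F_X^e$ is a homeomorphism, $f^{(q)}$ is the base change of $f$ along $F^e_X$, and $F^e_f$ and $G^e_f$ are universal homeomorphisms. Consequently, the fiber of $f^{(q)}$ over $\p$ is $Y_\p \otimes_{\kappa(\p),F^e} \kappa(\p)$, i.e. the base change of the fiber $Y_\p$ along the field extension $F^e\colon \kappa(\p) \to \kappa(\p)$, which we view as $\kappa(\p)^{1/q}/\kappa(\p)$. Closed points of $T$ correspond bijectively to closed points of $S$ via the homeomorphism $G^e_f$, so the relevant $\p$ are exactly those whose fiber $Y_\p$ contains a closed point of $Y$.

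The second step is to show that $\gamma$ is compatible with base change to fibers. The relative Frobenius commutes with base change $\Spec \kappa(\p) \to \Spec R$: the fiber of $F^e_f$ over $\p$ is the relative Frobenius $F^e_{Y_\p/\kappa(\p)}$. By base change for relative dualizing sheaves of Cohen--Macaulay maps (\cite[Theorem 3.6.1]{ConradGDualityAndBaseChange}), $\omega_f|_{Y_\p} = \omega_{Y_\p/\kappa(\p)}$, and the trace/isomorphism $\Gamma$ restricts to the corresponding one for the fiber. Hence $\gamma\otimes\kappa(\p)$ is the relative Cartier operator of $Y_\p \to \Spec \kappa(\p)$.

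It then remains to prove that, for a Cohen--Macaulay scheme $Z$ over a field $K$ that is geometrically $F$-injective, the relative Cartier operator $F^e_{Z/K,\ast}\omega_{Z/K} \to \omega_{Z^{(q)}/K}$ is surjective. We would argue as follows. Write $K' = K^{1/q}$, so that $Z^{(q)} = Z_{K'}$ up to relabeling. The absolute Frobenius of the base change $Z_{K'}$ factors through the relative one, and by \autoref{le.CMtracefacorization} applied to $Z_{K'} \to \Spec K'$ (or directly), the absolute Cartier operator of $Z_{K'}$ factors as the relative Cartier operator followed by an isomorphism coming from $\kappa_{K'}$, which is bijective for a field. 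Geometric $F$-injectivity gives that $Z_{K'}$ is $F$-injective, i.e. its Cartier operator is surjective, and hence the relative one is surjective as well. Surjectivity may be checked after a faithfully flat field extension if needed.

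We expect this last step to be the main obstacle: we must match the relative Cartier operator precisely with the absolute one on the appropriate Frobenius twist, including the identification $Z^{(q)} \cong Z\otimes_K K^{1/q}$ and the compatibility of traces with base change. If the direct identification proves delicate, we would instead pass to $\bar K$ or $K^{\mathrm{perf}}$, where the relative and absolute Frobenius differ only by an isomorphism of the base, and then descend surjectivity by faithful flatness.
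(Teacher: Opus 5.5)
Your skeleton matches the paper's: apply \autoref{le.fibernakayama} to $f^{(q)}$, reduce to fibers containing closed points using that the relative Cartier operator of a Cohen--Macaulay map commutes with base change, and get surjectivity on those fibers from geometric $F$-injectivity. Your bookkeeping of finite generation, and of which fibers of $f^{(q)}$ contain closed points, is if anything more careful than the paper's.

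However, the ``direct'' argument you give for the last step fails as written. Applying \autoref{le.CMtracefacorization} to $Z_{K'}\to\Spec K'$ writes the absolute Cartier operator of $Z_{K'}$ as $\gamma_{Z_{K'}/K'}$ followed by $\id\otimes\kappa_{K'}$. But $\kappa_{K'}\colon F^e_*K'\to K'$ maps a $[K':K'^{q}]$-dimensional $K'$-vector space onto a line, so it is injective only when $K'$ is perfect. Now $K'=K^{1/q}$ is imperfect whenever $K$ is. For $K=\kappa(\p)$ this happens, for instance, whenever $\p$ is not a closed point, since $\kappa(\p)$ then has positive transcendence degree over $\kay$. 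In that case surjectivity of the composite says nothing about surjectivity of $\gamma_{Z_{K'}/K'}$.

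What you actually need is your fallback, and that is exactly the paper's proof. Base change $\gamma$ to $\kappa(\p)_{\mathrm{perf}}$, where the Cartier operator of the base is an isomorphism, so the relative Cartier operator is identified with the absolute one. Apply geometric $F$-injectivity there. Then descend surjectivity along the faithfully flat extension $\kappa(\p)\to\kappa(\p)_{\mathrm{perf}}$.

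The $K^{1/q}$ route can also be repaired, though not via \autoref{le.CMtracefacorization}. Let $p\colon Z_{K'}\to Z$ be the projection, which is finite since $\kappa(\p)$ is $F$-finite, and use $Z^{(q)}\cong Z_{K'}$. Then $F^e_{Z_{K'}}=F^e_{Z/K}\circ p$. Up to the identifications of dualizing sheaves, the trace of $F^e_{Z_{K'}}$ is the trace of $F^e_{Z/K}$ \emph{precomposed} with $F^e_{Z/K,\ast}\Tr_p$. Hence the image of the absolute Cartier operator of $Z_{K'}$ lies in the image of the relative Cartier operator of $Z/K$, and $F$-injectivity of $Z_{K'}$ gives the desired surjectivity.
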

\begin{proof}[Proof of claim]
    Since $f$ is  Cohen–Macaulay, its Cartier operator is compatible with base change (see \cite[Lemma 2.16]{PatakfalviSchwedeZhangFsingsinFamilies}). Thus, the base change of $\gamma$ to any perfect fiber $\Spec S \otimes \kappa(\p)_{\mathrm{perf}} \to \Spec \kappa(\p)_{\mathrm{perf}}$ is the corresponding relative Cartier operator. Since the base is now perfect, we may identify this with the absolute Cartier operator. If $f^{-1}(\p)$ contains a closed point, then such a Cartier operator is surjective by our assumption that such fibers are geometrically $F$-injective. Since $\kappa(\p) \to \kappa(\p)_\mathrm{perf}$ is faithfully flat, it follows that the relative Cartier operators on ordinary fibers containing closed points are also surjective. The claim then follows from \autoref{le.fibernakayama}.
\end{proof}

Using the factorization of $\lambda$ obtained in \autoref{le.CMtracefacorization} and the common test element $c \in R$, from the surjectivity of $\gamma$, we get that 
\[
\uptau(\omega_Y) =  \sum_{e \geq e_0} \lambda^e(F^e_* c^t \omega_Y) = \sum_{e \geq e_0} (\id \otimes f^\ast \kappa^e)(\omega_f \otimes c^t  f^\ast F^e_*\omega_X) = \omega_f \otimes f^\ast \uptau(\omega_X),
\] 
and the transformation rule is proven.
\end{proof}

\bibliographystyle{skalpha}
\bibliography{MainBib}

\end{document}